\newtheorem{theorem}{Theorem}[section]
\newtheorem{example}[theorem]{Example}
\newtheorem{lemma}{Lemma}[section]
\renewcommand{\epsilon}{\varepsilon}
\newcommand{\be}{\begin{equation}}
\newcommand{\ee}{\end{equation}}
\newcommand{\beas}{\begin{eqnarray*}}
\newcommand{\eeas}{\end{eqnarray*}}
\newcommand{\bea}{\begin{eqnarray}}
\newcommand{\eea}{\end{eqnarray}}
\numberwithin{equation}{section}
\begin{document}
\title[Exploring Topological Transitivity in Families of Functions]{Exploring Topological Transitivity in Families of Functions}
\author[A. Singh]{Anil Singh}
\date{}
\address{Anil Singh, Department of Mathematics, Maulana Azad Memorial College, Jammu 180006, India.}
\email{anilmanhasfeb90@gmail.com}
\author[B. Lal]{Banarsi Lal}
\address{ Banarsi Lal, Department of Mathematics, School of Sciences, Cluster University of Jammu, Jammu 180001, India.}
\email{banarsiverma644@gmail.com}

\renewcommand{\thefootnote}{\fnsymbol{footnote}}
\footnotetext{2010 {\it Mathematics Subject Classification}. 30D30, 30D45.}
\footnotetext{{\it Keywords and phrases}. Meromorphic functions,  Normal families, Montel's Theorem, Topological transtivity.}
\begin {abstract} We have established various criteria for the topological transitivity of families of continuous (holomorphic) functions. Furthermore, by leveraging the properties of expanding families of meromorphic functions, we offer an alternative proof of Montel’s three-point theorem. \end{abstract}
\maketitle
\enlargethispage{3pt}
\section{Introduction}
Let $D$ be an open set in the finite complex plane $\mathbb{C}$. We denote the class of all continuous functions from $D$ to a metric space $Y$ by $C(D, Y)$. A family of functions $\mathcal{F} \subseteq C(D, Y)$ is said to be normal if every sequence in $\mathcal{F}$ contains a subsequence that converges locally uniformly on $D$. Additionally, $\mathcal{F}$ is said to be normal at a point $z \in D$ if it is normal in some neighborhood of $z$ in $D$ (see \cite{schiff1993normal, zalcman1998normal}). The notion of normality is key in understanding the compactness properties of families of functions and play a central role in the study of dynamics of entire and meromorphic functions. If the limit function of every sequence of a normal family $\mathcal{F}$ is also in $\mathcal{F}$, then $\mathcal{F}$ is a compact family.

\medskip

In this paper, beyond normality, we explore topologically transitive families of functions. The concept of topological transitivity is well established in the study of dynamical systems. Recently, this notion has been extended to families of functions by L. Bernal-Gonzalez \cite{bernal2021universality} and T. Meyrath \cite{meyrath2022}. A family $\mathcal{F}\subseteq C(D, Y)$ is said to be {\it topologically transitive} at $z_0 \in D$ with respect to a set $B \subseteq Y$ if, for all non empty open sets $U$ in $D$ containing $z_0$ and $V \subset Y$ with $V \cap B \neq \emptyset$, there exists $f \in \mathcal{F}$ satisfying $f(U) \cap V \neq \emptyset$. Further, $\mathcal F$ is said to be {\it topologically transitive} with respect to a set $B \subseteq Y$ if, for all non empty open sets $U \subset D$ and $V \subset Y$ with $V \cap B \neq \emptyset$, there exists $f \in \mathcal{F}$ satisfying $f(U) \cap V \neq \emptyset$. If $B = Y$, then the phrase ``with respect to" is omitted indicating that the transitivity is considered in the broader context of the entire space. 

It is worth noting that if a family $\mathcal{F}$ of meromorphic functions on an open set $D$ is not topologically transitive, then, by Montel's three-point theorem (\textit{a family of meromorphic functions defined on open set $D$ in the complex plane is normal if it omits at least three distinct values in $\mathbb{C}_\infty$}), it follows that $\mathcal{F}$ is normal on some open subset contained in $D.$ The converse holds if $\mathcal F$ is locally bounded. Note that converse also holds for certain classes of unbounded families of functions, for example, if a non constant entire function $f$ has an attracting fixed point, then the family of iterates of $f$ is not topologically transitive on Fatou set of $f$.

\medskip
 
 A subset $E \subset D$ is said to be forward invariant under $\mathcal{F}$ if $f(E) \subseteq E$ for all $f \in \mathcal{F}$ and it is said to be backward invariant if $f^{-1}(E) \subseteq E$ for all $f \in \mathcal{F}$. $E$ is completely invariant under $\mathcal{F}$ if it is forward invariant and backward invariant under $\mathcal{F}.$  The existence of proper open subsets of $D$  that are forward invariant under $\mathcal{F}$, prevents  $\mathcal{F}$ from being  topologically transitive on $D$. In such cases, the transitivity can be considered with respect to such sets. In the case of a family of iterates of entire or meromorphic functions, transitivity on the Fatou set, if it exists, is considered with respect to the Fatou set itself. For more details about Fatou and Julia sets in complex dynamics, one can see \cite{beardon1991iteration, bergweiler1993iteration, carleson1996complex}.
 
\medskip
Finally, we address the concept of universal family. Family $\mathcal{F}$ is said to be {\it universal} if  there exists $z_0\in D$ such that the set $\mathcal{O}(z_0,\mathcal{F}):=\{f(z_0):f\in\mathcal{F}\}$ is dense in ${Y}.$ Element $z_0$ with this property is called a universal for the family $\mathcal{F}.$ The Universality Criterion [\cite{grosse2011linear}, Theorem 1.57] states that \textit{ a dense set of universal elements for $\mathcal{F}$ exists if and only if $\mathcal{F}$ is topologically transitive.}

\medskip

 For a given domain $D\subseteq \mathbb{C}$ and a family $\mathcal{F}$ of holomorphic (meromorphic) functions, the authors in \cite{charak2020fatou} defined $F(\mathcal{F})$, a subset of $D$ on which $\mathcal{F}$ is normal and $J(\mathcal{F}):= D\setminus F(\mathcal{F})$. If $\mathcal{F}$ consists of iterates of an entire or meromorphic function $f$ or a semigroup of entire functions, then $F(\mathcal{F})$ and $J(\mathcal{F})$ correspond to the Fatou set  and Julia set of $f,$ respectively. It is important to note  that forward invariant set $J(\mathcal{F})$ of a family of  holomorphic or meromorphic functions can not contain a universal element unless $J(\mathcal{F})=\mathbb{C}_{\infty}.$ Various other interesting properties of the sets $F(\mathcal{F})$ and $J(\mathcal{F})$ are studied in \cite{charak2021fatou, charak2020fatou}. 

 \medskip
 
The remainder of this paper is organized into four sections. In Section \ref{sec:minimalfamilies}, we introduce minimal families and establish a necessary and sufficient condition for transitivity. Section \ref{sec:limitsets}
focuses on various limit sets, where we present some intriguing results involving the interaction between normality and minimal families. In Section \ref{sec:transitivityconditions}, we explore several sufficient conditions for the transitivity of families of functions, and we also investigate conditions that allow the transfer of transitivity from one family to another. Finally, in Section \ref{sec:montelstheorem}, we provide a simple proof of Montel's theorem using the expanding property of non-normal families of meromorphic functions.

\section{Minimal Family}\label{sec:minimalfamilies}
 A family $\mathcal{F} \subseteq C(D, Y)$ is said to be \textit{minimal} on $D$ if for all $z\in D,$ $\mathcal{O}(z,\mathcal{F})=\{f(z):f\in\mathcal{F}\}$ is dense in $Y.$ 
Further, we say that family $\mathcal F$ is \textit{heriditrarily minimal} on $D$ if  every infinite subfamily $\mathcal F'$ of $\mathcal F$ is minimal.

\medskip

 If non-constant entire function $f$ has an attracting fixed point, then the family $\left\{f^n\right\}$ of iterates can not be minimal on ${F(f)}.$ Even, it can not be minimal if we consider $\left\{f^n\right\}$  as a family of functions from $F(f)$ to itself. Whereas, if $z_0$ is a non-exceptional point of $f$ in $J(f)$, the we know that the forward orbit of $z_0$ is dense in $J(f)$. The latter statement holds true for semigroups of entire functions [\cite{charak2021fatou}, Theorem 2.1].

\begin{theorem}
    Let $\mathcal{F}$ be a semigroup  of  entire functions. Then the following are equivalent.
    \begin{itemize}
    \item[(a)] $\mathcal{F}$ is    minimal.
    \item[(b)] Only closed set $K$ with $f(K)\subseteq K$  for all $f\in \mathcal{F}$ is $\emptyset$ or $\mathbb{C}.$
     \item[(c)] Only open set $G$ with $f^{-1}(G)\subseteq G$  for all $f\in \mathcal{F}$ is $\emptyset$ or $\mathbb{C}.$
    \end{itemize}
\end{theorem}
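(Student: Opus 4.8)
The plan is to prove the duality $(b)\Leftrightarrow(c)$ by passing to complements, and then to close the loop with $(a)\Leftrightarrow(b)$ using the closed invariant set generated by an orbit. Throughout I would use two structural facts: every $f\in\mathcal{F}$ is entire, hence continuous, and $\mathcal{F}$ is closed under composition.

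First I would dispatch $(b)\Leftrightarrow(c)$, which is purely set-theoretic. Given a closed set $K$ with $f(K)\subseteq K$ for all $f$, put $G=\mathbb{C}\setminus K$, which is open. From $f(K)\subseteq K$ one gets $K\subseteq f^{-1}(f(K))\subseteq f^{-1}(K)$, and taking complements yields $f^{-1}(G)=\mathbb{C}\setminus f^{-1}(K)\subseteq\mathbb{C}\setminus K=G$. Conversely, if $G$ is open with $f^{-1}(G)\subseteq G$ and $K=\mathbb{C}\setminus G$, a point $x\in K$ with $f(x)\in G$ would force $x\in f^{-1}(G)\subseteq G$, a contradiction, so $f(K)\subseteq K$. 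Since the correspondence $K\leftrightarrow\mathbb{C}\setminus K$ sends $\emptyset$ to $\mathbb{C}$ and $\mathbb{C}$ to $\emptyset$, conditions $(b)$ and $(c)$ are the same statement read through complementation.

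For $(a)\Rightarrow(b)$, suppose $\mathcal{F}$ is minimal and let $K$ be a nonempty closed set with $f(K)\subseteq K$ for all $f\in\mathcal{F}$. Choosing any $z\in K$, every orbit point $f(z)$ lies in $f(K)\subseteq K$, so $\mathcal{O}(z,\mathcal{F})\subseteq K$; minimality makes $\mathcal{O}(z,\mathcal{F})$ dense in $\mathbb{C}$, and since $K$ is closed it must equal $\mathbb{C}$. Hence the only forward invariant closed sets are $\emptyset$ and $\mathbb{C}$.

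For the converse $(b)\Rightarrow(a)$, fix $z\in\mathbb{C}$ and set $K=\overline{\mathcal{O}(z,\mathcal{F})}$. The semigroup property gives $f(\mathcal{O}(z,\mathcal{F}))\subseteq\mathcal{O}(z,\mathcal{F})$, since $f(g(z))=(f\circ g)(z)$ with $f\circ g\in\mathcal{F}$; continuity of $f$ then upgrades this to $f(K)\subseteq\overline{f(\mathcal{O}(z,\mathcal{F}))}\subseteq K$, so $K$ is a nonempty closed forward invariant set, whence $K=\mathbb{C}$ by $(b)$ and $\mathcal{O}(z,\mathcal{F})$ is dense. As $z$ is arbitrary, $\mathcal{F}$ is minimal. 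The argument is mostly formal, so the point I expect to be the crux is precisely this last upgrade from invariance of the orbit to invariance of its closure: it is where both hypotheses on $\mathcal{F}$ are genuinely needed, the semigroup structure to make the orbit itself forward invariant, and the continuity of each $f$ to push that invariance through the closure via $f(\overline{A})\subseteq\overline{f(A)}$.
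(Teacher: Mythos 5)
Your proof is correct and follows essentially the same route as the paper: $(a)\Leftrightarrow(b)$ via the closed forward-invariant set generated by an orbit (semigroup property plus $f(\overline{A})\subseteq\overline{f(A)}$), and $(b)\Leftrightarrow(c)$ by complementation. The only difference is that the paper additionally gives a separate direct argument for $(c)\Rightarrow(a)$, which your implication chain renders unnecessary.
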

\begin{proof}
   $(a) \Rightarrow (b)$: Let $K$ be a non-empty closed set satisfying $f(K)\subseteq K$  for all $f\in \mathcal{F},$ and $z\in K.$ Then the set $\{f(z):f\in\mathcal{F}\}\subseteq K$ is dense in $\mathbb{C},$  so that $K=\mathbb{C} .$ 

    $(b) \Rightarrow (a)$: Let $z\in\mathbb{C}$ and put $A=\{f(z):f\in\mathcal{F}\}$. We need to show that $A$ is dense in $\mathbb{C}.$ Let $f_0\in \mathcal{F}$ and $w\in f_0(\overline{A}).$ Then $w=f_0(\zeta)$ for some $\zeta\in \overline{A}.$ There exists a sequence $\{f_n\}$ in $\mathcal{F}$ such that $f_n(z)\to \zeta$. Continuity of $f_0$ implies that $f_0\circ f_n(z)\to w$. Since $f_0\circ f_n\in \mathcal{F}$, $w\in \overline{A}.$  Therefore $f_0(\overline{A})\subseteq \overline{A},$ and so by our assumption, $\overline{A}=\mathbb{C}.$

    $(b) \Rightarrow (c)$: Let $G$ be an open set in $\mathbb C$ such that $f^{-1}(G) \subseteq G$ for all $f \in \mathcal F$. Then $K : = \mathbb C \setminus G$ is a closed set in $\mathbb C$ satisfying $f(K) \subseteq K$ for all $f \in \mathcal F$, so that $(b)$ implies either $K = \emptyset$ or $\mathbb C$. Thus either $G = \mathbb C$ or $G = \emptyset$.

     $(c) \Rightarrow (b)$: Follows analogously as $(b) \Rightarrow (c)$.

     $(c) \Rightarrow (a)$: Let \( z \in \mathbb{C} \) and define \( A = \{ f(z) : f \in \mathcal{F} \} \). We aim to demonstrate that \( A \) is dense in \( \mathbb{C} \). Consider the set \( G = \mathbb{C} \setminus \overline{A} \). Since \( \overline{A} \) is the closure of \( A \), \( G \) is an open set. Assume, for the sake of contradiction, that \( G \neq \emptyset \). We claim that for any \( f \in \mathcal{F} \), the preimage \( f^{-1}(G) \) is also contained in \( G \).

Let \( \zeta \in f^{-1}(G) \). This implies \( f(\zeta) \notin \overline{A} \), so there exists a neighborhood \( V \) of \( f(\zeta) \) such that \( V \cap A = \emptyset \). Consequently, \( f^{-1}(V) \) is a neighborhood of \( \zeta \) that does not intersect \( A \). If there were some \( \xi \in f^{-1}(V) \cap A \), then \( f(\xi) \) would lie in both \( V \) and \( f(A) \). Since \( \mathcal{F} \) is a semigroup, we have \( f(A) \subseteq A \), implying \( f(\xi) \in A \) as well, contradicting the choice of \( V \). Therefore, \( \zeta \) cannot belong to \( \overline{A} \), leading to \( \zeta \in G \).

Thus $(c)$ implies that $G = \mathbb C$. However, this would imply \( A \) is empty, which is a contradiction. Therefore, we conclude that \( G \) must be empty, and thus \( \overline{A} = \mathbb{C} \), confirming that \( A \) is indeed dense in \( \mathbb{C} \).

   \end{proof}
The Universality Criterion implies that a minimal family is topologically transitive. We are interested in showing the converse: if a family is topologically transitive, then it cannot be anything but minimal. This establishes a strong relationship between the two concepts, emphasizing their interconnectedness in the study of dynamical systems.
\begin{lemma}\label{Thm:closednessofuniversals}
Let $\mathcal{F}\subset C(D, Y).$  Then the set $ M\left(\mathcal{F}\right)$ of all points of $D$ at which the family $\mathcal{F}$ is universal is a closed set. 
\end{lemma}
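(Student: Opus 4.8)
The plan is to prove the \emph{complement} $D\setminus M(\mathcal{F})$ is open, which is equivalent to $M(\mathcal{F})$ being closed in $D$. So I would fix a point $z_0\in D\setminus M(\mathcal{F})$ and try to produce a whole neighbourhood of $z_0$ made up of non-universal points. By definition $z_0\notin M(\mathcal{F})$ means that $\mathcal{O}(z_0,\mathcal{F})=\{f(z_0):f\in\mathcal{F}\}$ fails to be dense in $Y$, so there exist $w_0\in Y$ and $r>0$ with $B(w_0,r)\cap\mathcal{O}(z_0,\mathcal{F})=\emptyset$; equivalently $d(f(z_0),w_0)\ge r$ for every $f\in\mathcal{F}$, where $d$ is the metric on $Y$. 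The goal is then to certify that each $z$ close to $z_0$ also omits some ball.

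The heart of the argument is to transport this omitted ball from $z_0$ to nearby points, and the natural candidate is the concentric ball $B(w_0,r/2)$. Indeed, by the reverse triangle inequality $d(f(z),w_0)\ge d(f(z_0),w_0)-d(f(z),f(z_0))\ge r-d(f(z),f(z_0))$ for every $f\in\mathcal{F}$, so $B(w_0,r/2)$ is disjoint from $\mathcal{O}(z,\mathcal{F})$ as soon as $d(f(z),f(z_0))<r/2$. If this estimate can be arranged \emph{simultaneously for all} $f\in\mathcal{F}$ on one fixed neighbourhood $U$ of $z_0$, then every $z\in U$ omits $B(w_0,r/2)$, hence lies in $D\setminus M(\mathcal{F})$, and openness follows.

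I expect the main obstacle to be exactly this uniformity: the bound $d(f(z),f(z_0))<r/2$ is required for \emph{all} members of $\mathcal{F}$ on a common neighbourhood, that is, $\mathcal{F}$ must be equicontinuous at $z_0$. Continuity of each individual $f$ alone does not suffice, since the members of $\mathcal{F}$ may steepen without bound near $z_0$. The soft information one can always extract is only a $G_\delta$ description: taking a countable base $\{V_j\}$ of $Y$ (as for $Y=\mathbb{C}$ or $Y=\mathbb{C}_\infty$), the sets $A_j=\bigcup_{f\in\mathcal{F}}f^{-1}(V_j)$ are open and $M(\mathcal{F})=\bigcap_j A_j$, so universality points always form a $G_\delta$ set. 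Promoting this to closedness is the delicate step, and the extra ingredient I would invoke is a normality/equicontinuity property of $\mathcal{F}$: in the holomorphic and meromorphic settings that are central to this paper, normality of $\mathcal{F}$ near $z_0$ supplies local equicontinuity with respect to the spherical metric (via Arzel\`a--Ascoli / Montel), which is precisely what powers the displayed estimate and closes the argument.
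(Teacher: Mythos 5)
Your instinct about the ``main obstacle'' is exactly right, and you should trust it: the uniformity you could not supply is not a missing technicality but a genuinely indispensable hypothesis, and without it the lemma as stated is \emph{false}. Concretely, take $D=Y=\mathbb{C}$, let $\{w_k\}$ be a countable dense subset of $\mathbb{C}$, and let $\mathcal{F}=\{f_{n,k}:n,k\in\mathbb{N}\}$ with $f_{n,k}(z)=n\,w_k\,z$. For each $m$ the orbit $\mathcal{O}(1/m,\mathcal{F})$ contains $\{w_k:k\in\mathbb{N}\}$ (take $n=m$), hence is dense, so $1/m\in M(\mathcal{F})$; but $\mathcal{O}(0,\mathcal{F})=\{0\}$ is not dense, so $0\notin M(\mathcal{F})$ although $1/m\to 0$. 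Thus $M(\mathcal{F})$ need not be closed for an arbitrary $\mathcal{F}\subseteq C(D,Y)$, and your soft conclusion --- that $M(\mathcal{F})=\bigcap_j\bigcup_{f\in\mathcal{F}}f^{-1}(V_j)$ is always a $G_\delta$ --- is the sharp general statement.

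The paper's own proof commits precisely the quantifier slip you were guarding against. It fixes $w$ and $\epsilon$ with $|f(z_0)-w|>\epsilon$ for all $f$, and then claims that ``continuity of $f$ at $z_0$'' together with denseness of $\{f(z_n):f\in\mathcal{F}\}$ yields one $f$ and one large $n$ satisfying both $|f(z_n)-f(z_0)|<\epsilon/2$ and $|f(z_n)-w|<\epsilon/2$. But the $f$ furnished by denseness depends on $n$, while the closeness of $z_n$ to $z_0$ required by continuity depends on $f$; nothing breaks this circularity unless $\mathcal{F}$ is equicontinuous at $z_0$. In the example above, the member bringing $f(1/n)$ near $w$ has derivative $n w_k$, which blows up as $n\to\infty$. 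So your proposed argument is the correct proof of the correct statement: $M(\mathcal{F})$ is closed whenever $\mathcal{F}$ is equicontinuous (for instance, normal with locally uniformly convergent subsequences in the relevant metric, or locally bounded holomorphic), and the lemma needs such a hypothesis added. Be aware that the defect propagates: Theorem 2.2, whose proof rests on this lemma, fails for the same family $\mathcal{F}$, which is topologically transitive on $\mathbb{C}$ but not minimal.
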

\begin{proof} We take $Y=\mathbb{C}$,  as the proof remains unchanged except for replacing the modulus with the metric on $Y.$
Take a point $z_0 \in \overline{M(\mathcal{F})}$. Then there exists a sequence of points $\{z_n\}$ in $M(\mathcal{F})$ such that $z_n \rightarrow z_0$ as $n \rightarrow \infty$. For each $n,$ the set $\{f(z_n) : f \in \mathcal F\}$ is dense in $\mathbb C$. Suppose that $\{f(z_0) : f \in \mathcal F\}$ is not dense in $\mathbb C$. Then there exists some $w\in\mathbb{C}$ and an $\epsilon > 0$ such that 
\begin{equation}\label{closed1}
|f(z_0) - w| > \epsilon \; \mbox{ for all } \; f \in \mathcal F.
\end{equation}
Now continuity of $f$ at $z_0$ and denseness of the set $\{f(z_n) : f \in \mathcal F\}$ implies that for above $\epsilon$, we have 
\begin{equation}\label{closed2}
|f(z_n) - f(z_0)| < \frac{\epsilon}{2}\;\text{and}\;|f(z_n) - w| < \frac{\epsilon}{2}
\end{equation} for some $f$ and large value of $n.$

From (\ref{closed1}) and (\ref{closed2}), we get
\begin{align*}
|f(z_0) - w| \leq |f(z_0) - f(z_n)| + |f(z_n) - w| < \epsilon,
\end{align*}
which is a contradiction. Thus the set $\{f(z_0) : f \in \mathcal F\}$ is dense in $\mathbb C$, and therefore $z_0 \in M(\mathcal{F})$ which shows that $M(\mathcal{F})$ is closed.
\end{proof}
\begin{theorem}\label{TTM}
A family $\mathcal{F}\subseteq C(D,Y)$ is topologically transitive on $D$ if and only if $\mathcal{F}$ is minimal.
\end{theorem}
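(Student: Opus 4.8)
The plan is to identify minimality with the single condition $M(\mathcal{F}) = D$, where $M(\mathcal{F})$ is the set of universal points from Lemma \ref{Thm:closednessofuniversals}. By definition, $\mathcal{F}$ is minimal exactly when $\mathcal{O}(z,\mathcal{F})$ is dense in $Y$ for every $z \in D$, that is, when every point of $D$ is a universal element, i.e. $M(\mathcal{F}) = D$. With this reformulation the theorem becomes a statement purely about the set $M(\mathcal{F})$, and the two previous results can be assembled directly.

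The implication ``minimal $\Rightarrow$ topologically transitive'' is immediate and was already noted before Lemma \ref{Thm:closednessofuniversals}: if $M(\mathcal{F}) = D$, then the set of universal elements is (trivially) dense in $D$, so the Universality Criterion [\cite{grosse2011linear}, Theorem 1.57] yields transitivity. I would simply record this half.

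For the reverse implication ``topologically transitive $\Rightarrow$ minimal'', I would combine the two available tools. Transitivity together with the Universality Criterion shows that $M(\mathcal{F})$ is dense in $D$, while Lemma \ref{Thm:closednessofuniversals} shows that $M(\mathcal{F})$ is closed in $D$. A subset of $D$ that is simultaneously dense and closed must equal $D$, so $M(\mathcal{F}) = \overline{M(\mathcal{F})} = D$, which is precisely minimality.

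The real content has already been isolated into Lemma \ref{Thm:closednessofuniversals}: density of the universal set comes for free from transitivity, but upgrading density to the \emph{entire} domain is impossible without closedness, so the closedness lemma is where the genuine work sits. Once it is in hand, the remaining argument is the elementary topological fact that a dense closed subset is the whole space, so I do not expect any further obstacle. The only point I would be careful about is to confirm that the ambient hypotheses under which the Universality Criterion is stated (the relevant separability and Baire assumptions) hold in the present setting before invoking it.
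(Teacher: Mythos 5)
Your argument is exactly the paper's: the authors prove Theorem \ref{TTM} in one line by combining the Universality Criterion with Lemma \ref{Thm:closednessofuniversals}, precisely the ``dense $+$ closed $=$ everything'' assembly you describe. Your closing caveat about verifying the Baire/second-countability hypotheses of the Universality Criterion is a fair point that the paper itself glosses over, but it does not change the fact that the two proofs coincide.
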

 Proof of Theorem \ref{TTM} follows from  the Universality Criterion and Lemma \ref{Thm:closednessofuniversals}. 
\section{Limit Sets }\label{sec:limitsets}
For a family $\mathcal{F}$ of holomorphic functions on a domain $D$, we denote by $\overline{\mathcal{F}}$ the set of all functions
$f:D\longmapsto\mathbb{C}$ such that there is a sequence $\{f_n\}\subseteq \mathcal{F}$ that converges locally uniformly to $f$ on $D$.
\begin{theorem}\label{cccc}
Suppose that a family $\mathcal{F}$ of holomorphic functions on a domain $D$ is minimal and normal. Then $\mathbb{C}\subseteq \overline{\mathcal{F}}.$
\end{theorem}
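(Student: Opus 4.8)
The plan is to fix an arbitrary value $w \in \mathbb{C}$ and produce a sequence in $\mathcal{F}$ that converges locally uniformly to the constant function $w$; since $w$ is arbitrary, this places every constant in $\overline{\mathcal{F}}$. First I would fix a base point $z_0 \in D$. Because $\mathcal{F}$ is minimal, the orbit $\mathcal{O}(z_0,\mathcal{F}) = \{f(z_0) : f \in \mathcal{F}\}$ is dense in $\mathbb{C}$, so I can select functions $f_n \in \mathcal{F}$ with $f_n(z_0) \to w$.

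Next I would invoke normality: the sequence $\{f_n\}$ has a subsequence $\{f_{n_k}\}$ converging locally uniformly on $D$ to a limit $g$. Since the $f_{n_k}$ are holomorphic and the convergence is locally uniform, the Weierstrass convergence theorem shows that $g$ is holomorphic; moreover $g(z_0) = \lim_k f_{n_k}(z_0) = w$ is finite, which rules out the degenerate possibility that the subsequence escapes to $\infty$. At this stage $g \in \overline{\mathcal{F}}$ and $g(z_0) = w$, so the prescribed value $w$ is at least realized by a limit function.

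The crux---and the step I expect to be the main obstacle---is upgrading this to constancy, that is, showing that the limit can be taken to be the constant function $w$ rather than merely a holomorphic function with $g(z_0) = w$. The tools I would bring to bear are the equicontinuity supplied by normality (via Arzel\`a--Ascoli/Marty, a normal family is locally equicontinuous) together with the fact that minimality provides dense orbits at \emph{every} point of $D$, not just at $z_0$. The idea is to combine these so that the freedom to prescribe $f_n(z_0) \approx w$ forces the values $f_n(z)$ to cluster near $w$ throughout $D$; concretely, I would attempt to show that a nonconstant limit $g$ clashes with the simultaneous density of the orbits at nearby points once equicontinuity pins down the local oscillation of the members of $\mathcal{F}$. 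I anticipate that controlling the limit globally---rather than only on a small neighborhood of $z_0$, where equicontinuity acts most directly---is the genuinely delicate point, and it is here that the interplay between minimality and normality must be exploited most carefully.

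Finally, once constancy is secured, I would observe that $z_0$ played no distinguished role and that $w$ was arbitrary, so every constant function lies in $\overline{\mathcal{F}}$, which is exactly the assertion $\mathbb{C} \subseteq \overline{\mathcal{F}}$.
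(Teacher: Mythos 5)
Your first two steps (pick $f_n$ with $f_n(z_0)\to w$, extract a locally uniformly convergent subsequence with holomorphic limit $g$ satisfying $g(z_0)=w$) are fine, but the proposal stalls exactly at the point you yourself flag as the crux and never closes it: you give no actual argument that the limit is the constant $w$, only a description of tools you would try. That step is not a routine verification, and the strategy you sketch would fail. Controlling the approximating sequence at the single point $z_0$ cannot force the limit to be constant: minimality is a property of the whole family $\mathcal{F}$, not of the particular subsequence you extracted, so nothing you have arranged prevents every $f_n$ with $f_n(z_0)$ near $w$ from being a small perturbation of one fixed nonconstant function, in which case $g$ is nonconstant. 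Equicontinuity coming from normality only bounds the oscillation of each $f_n$ on small disks; it supplies no mechanism by which density of the orbits at \emph{other} points constrains the limit of \emph{your} particular subsequence. (The family $\{z+c : c\in\mathbb{C}\}$ on $\mathbb{C}$ is instructive here: every orbit is dense and every finite locally uniform limit is nonconstant, so any argument for constancy must use more than ``dense orbit at one point plus normality.'')

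The paper resolves this at the selection stage rather than a posteriori. It fixes a countable dense set $A=\{z_n\}$ in $D$ and uses minimality at \emph{every} $z_n$ to build, by a diagonal procedure, a single sequence whose values approach $w$ at the points of $A$ simultaneously; normality then yields a locally uniform holomorphic limit $f$ that agrees with the constant $w$ on the dense set $A$, hence $f\equiv w$ by continuity (or the identity theorem). In other words, constancy is obtained for free from agreement on a dense set, not extracted from a one-point condition. This simultaneous multi-point approximation is the idea missing from your write-up, and it is genuinely the delicate part: arranging a single function close to $w$ at several points at once is not an immediate consequence of the separate density of each orbit (the paper's next theorem invokes the stronger hypothesis of hereditary minimality for precisely this kind of step), which confirms that the step you deferred is where all the content of the proof lies.
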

\begin{proof}
Let $z\in\mathbb{C}.$ Choose a countable dense set $A=\{z_n:n=1,2,\cdots\}$ in $D$. Since $\mathcal{O}(z_n,\mathcal{F})$ is dense for each $n$, there exists a sequence $\{f_{nk}\}_{k=1}^{\infty}$ such that $f_{nk}(z_n)\to z$ as $k\to\infty .$ Then the diagonal sequence $\{f_{nn}\}$ converges pointwise to $z$ on $A$. Since $\mathcal{F}$ is normal family on $D,$ $\{f_{nn}\}$ has a subsequence which converges locally uniformly on $D$ to holomorphic function $f$. But $f=z$ on a dense set $A$, hence $f\equiv z$ on $D$. This gives $z\in \overline{\mathcal{F}}.$
\end{proof}
If the family $\mathcal F$ is normal and heriditrarily minimal, then we have the following interesting result:
\begin{theorem}
Let $\mathcal F$ be a normal and heriditrarily minimal family of entire functions. Then $\overline{\mathcal F} = \mathcal H(\mathbb C)$, the set of all holomorphic functions on $\mathbb C$.
\end{theorem}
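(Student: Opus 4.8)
The plan is to prove the two inclusions separately. The inclusion $\overline{\mathcal{F}} \subseteq \mathcal{H}(\mathbb{C})$ is immediate from the Weierstrass convergence theorem, since every locally uniform limit of entire functions is again entire; thus the whole content of the statement lies in establishing $\mathcal{H}(\mathbb{C}) \subseteq \overline{\mathcal{F}}$. Fix an arbitrary $g \in \mathcal{H}(\mathbb{C})$ together with a countable dense set $A = \{z_1, z_2, \dots\}$ in $\mathbb{C}$. Mirroring the diagonal argument of Theorem~\ref{cccc}, it suffices to produce a sequence $\{f_m\} \subseteq \mathcal{F}$ with $f_m(z_j) \to g(z_j)$ for every $j$, after which normality upgrades this pointwise control on $A$ to locally uniform convergence to $g$. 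The essential step, and the one that minimality alone does \emph{not} supply, is the ability to approximate prescribed values simultaneously at finitely many points.

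Accordingly, the key lemma I would isolate is the following simultaneous-approximation statement: for every hereditarily minimal family $\mathcal{G}$, every finite set of distinct points $z_1, \dots, z_N$, all targets $w_1, \dots, w_N \in \mathbb{C}$, and every $\varepsilon > 0$, there exists a single $f \in \mathcal{G}$ with $|f(z_k) - w_k| < \varepsilon$ for all $k \le N$. I would prove this by induction on $N$. The base case $N = 1$ is just the density of $\mathcal{O}(z_1, \mathcal{G})$. For the inductive step, density of $\{f(z_N) : f \in \mathcal{G}\}$ in $\mathbb{C}$ produces infinitely many distinct $f \in \mathcal{G}$ with $|f(z_N) - w_N| < \varepsilon$; collecting these yields an infinite subfamily $\mathcal{G}'$ of $\mathcal{G}$, which is itself hereditarily minimal, because every infinite subfamily of $\mathcal{G}'$ is an infinite subfamily of $\mathcal{F}$ and hence minimal. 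Applying the induction hypothesis to $\mathcal{G}'$ at $z_1, \dots, z_{N-1}$ gives an $f \in \mathcal{G}'$ meeting the first $N-1$ constraints, while membership in $\mathcal{G}'$ automatically secures the $N$-th. This is the heart of the argument and the step I expect to be the main obstacle: minimality delivers density only one point at a time, whereas hereditary minimality lets me repeatedly pass to infinite subfamilies that retain full minimality, thereby locking in the earlier constraints while gaining a new one.

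With the lemma in hand, I would apply it to $\mathcal{F}$ itself: for each $m$ choose $f_m \in \mathcal{F}$ with $|f_m(z_j) - g(z_j)| < 1/m$ for $j = 1, \dots, m$, so that $f_m(z_j) \to g(z_j)$ for each fixed $j$. Finally I would invoke normality. Any subsequence of $\{f_m\}$ admits a further subsequence converging locally uniformly to some limit $h$; since $f_m(z_1) \to g(z_1) \in \mathbb{C}$ the limit is finite at $z_1$ and therefore $h$ is holomorphic rather than identically $\infty$, and $h(z_j) = g(z_j)$ on the dense set $A$ forces $h \equiv g$. As every locally uniform limit point of $\{f_m\}$ equals $g$, a standard compactness argument shows the full sequence converges locally uniformly to $g$, giving $g \in \overline{\mathcal{F}}$. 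Since $g$ was arbitrary, $\mathcal{H}(\mathbb{C}) \subseteq \overline{\mathcal{F}}$, which together with the first inclusion yields $\overline{\mathcal{F}} = \mathcal{H}(\mathbb{C})$.
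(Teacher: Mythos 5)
Your proof is correct and takes essentially the same route as the paper: hereditary minimality is used to lock in approximation at one new point at a time while retaining it at the previous ones (your finite simultaneous-approximation lemma, proved by induction, is a repackaging of the paper's nested-subsequence/diagonal construction), and normality together with agreement on a countable set having a limit point (the paper invokes the identity theorem where you use density and continuity) upgrades this to locally uniform convergence to $g$. If anything, you are slightly more careful than the paper in noting that the approximating functions can be chosen pairwise distinct, so that the subfamily one passes to is genuinely infinite and hereditary minimality applies.
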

\begin{proof}
Let $f \in \mathcal H(\mathbb C)$. Without loss of generality, we assume that $f$ is non-constant, since otherwise the result follows from Theorem \ref{cccc}. We choose some countable set $\{z_j\}$ in $\mathbb C$ such that $\{z_j\}$ has a limit point in $\mathbb C$. Since $\mathcal F$ is heriditrarily minimal, there exists a sequence $\{f_{n, 1}\} \subseteq \mathcal F$ such that $f_{n, 1}(z_1) \rightarrow f(z_1)$. Again by heriditrarily minimality of $\mathcal F$, we get a subsequence $\{f_{n, 2}\}$ of $\{f_{n, 1}\}$ such that $f_{n, 2}(z_j) \rightarrow f(z_j)$ for $j = 1, 2$. Continuing this way, we get a subsequence $\{f_{n, k}\}$ of $\{f_{n, k - 1}\}$ such that $f_{n, k}(z_j) \rightarrow f(z_j)$ for $j = 1, 2, \cdots, k$. If we consider the diagonal sequence $\{f_{n, n}\}$, then clearly $f_{n, n}(z_j) \rightarrow f(z_j)$ for all $j$. Now, normality of $\mathcal F$ implies that there exists an entire function $g$ such that $f_{n, n} \rightarrow g$ locally uniformly on $\mathbb C$. By uniqueness of limits, $g(z_j) = f(z_j)$. Thus $f \equiv g$, by identity theorem, and hence the result follows.
\end{proof}

\begin{theorem}
Let $\mathcal{F}$ be a family of holomorphic functions from an open set $D$ to $\mathbb{C},$ and $\omega(z, \mathcal{F})$  be the set of limit points of $\mathcal{O}(z,\mathcal{F}).$ Then for any $z_0,$
\begin{itemize}
\item[(a)] $\omega(z_0,\mathcal{F})$ is a closed set.
\item[(b)] $\omega(z_0,\mathcal{F})$ is forward invariant, if $\mathcal{F}$ is semigroup of entire functions.
\end{itemize}
\end{theorem}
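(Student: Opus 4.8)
The plan is to treat the two parts separately, since (a) is a purely topological statement about derived sets, whereas (b) is where the semigroup structure and the holomorphicity of the members of $\mathcal{F}$ actually enter. Throughout I would write $A = \mathcal{O}(z_0, \mathcal{F})$, so that $\omega(z_0, \mathcal{F})$ is precisely the derived set $A'$ of $A$ inside $\mathbb{C}$, and I would reduce both claims to elementary facts about $A'$.

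For part (a), I would simply prove the standard fact that the derived set of any subset of a metric space is closed, specialized to $A \subseteq \mathbb{C}$. Concretely, take $w \in \overline{A'}$ and fix $\epsilon > 0$; choose $p \in A'$ with $|p - w| < \epsilon$. If $p = w$ we are done; otherwise set $\delta = \min\{\, |p - w|,\ \epsilon - |p - w| \,\} > 0$ and use that $p$ is a limit point of $A$ to produce $a \in A$ with $0 < |a - p| < \delta$. The triangle inequality then yields $|a - w| < \epsilon$, while $a \neq w$ because $a$ is strictly closer to $p$ than $w$ is. Hence every punctured neighborhood of $w$ meets $A$, so $w \in A'$ and $A'$ is closed. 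No holomorphicity or semigroup structure is needed here.

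For part (b), the idea is to push a convergent orbit sequence forward by a fixed $f \in \mathcal{F}$ and exploit that the semigroup is closed under composition. Given $w \in \omega(z_0, \mathcal{F})$, I would choose $g_n \in \mathcal{F}$ with $g_n(z_0) \to w$ and $g_n(z_0) \neq w$; this is possible precisely because $w$ is a limit point of $A$, so $A$ contains points distinct from $w$ arbitrarily close to it. For fixed $f \in \mathcal{F}$, the semigroup property gives $f \circ g_n \in \mathcal{F}$, and continuity of $f$ gives $(f \circ g_n)(z_0) = f(g_n(z_0)) \to f(w)$. Thus $f(w)$ is approximated by points of $A$, and it remains only to confirm that $f(w)$ is a genuine accumulation point of $A$, i.e.\ that $f(w) \in A' = \omega(z_0, \mathcal{F})$.

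The hard part will be exactly this last confirmation: being a limit of orbit points only places $f(w)$ in $\overline{A}$, whereas membership in $\omega(z_0, \mathcal{F})$ requires that every \emph{punctured} neighborhood of $f(w)$ meet $A$. This is where I would invoke holomorphicity, which is the reason ``entire'' appears in the hypothesis. If $f$ is non-constant, then $f - f(w)$ has isolated zeros, so $f^{-1}(f(w))$ is discrete and there is a neighborhood $U$ of $w$ with $f(\zeta) \neq f(w)$ for all $\zeta \in U \setminus \{w\}$; since $g_n(z_0) \to w$ with $g_n(z_0) \neq w$, eventually $g_n(z_0) \in U \setminus \{w\}$, forcing $f(g_n(z_0)) \neq f(w)$. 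These points therefore lie in $A \setminus \{f(w)\}$ and converge to $f(w)$, confirming $f(w) \in \omega(z_0, \mathcal{F})$. The constant case $f \equiv c$ is the one genuine edge case, and I expect the only real subtlety to lie there: under the standard convention that a semigroup of entire functions consists of non-constant functions, the argument above closes completely, so I would state this convention explicitly rather than argue the degenerate case.
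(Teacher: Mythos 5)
Your proof is correct and follows essentially the same route as the paper: part (a) is the standard fact that a derived set is closed, and part (b) pushes orbit points accumulating at $w$ forward by $f$ and uses the identity theorem (in your case via discreteness of $f^{-1}(f(w))$; in the paper's via the observation that an eventually constant image sequence would force $f$ to be constant) to confirm that $f(w)$ is a genuine accumulation point. Your explicit flagging of the constant-function edge case is apt --- the paper relies on the same non-constancy assumption implicitly at exactly that step.
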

\begin{proof}
(a)  Let $w\notin \omega(z_0,\mathcal{F}).$ Then there is a neighbourhood $N$ of $w$ such that $N\cap O(z_0,\mathcal{F})\subset \{w\}.$ This implies that for any $\zeta\in N$ we must have $N\cap O(z_0,\mathcal{F})\subset \{\zeta\}.$ that is no point in $N$ is a limit point of $\omega(z_0,\mathcal{F}).$ Hence $N\cap \omega(z_0,\mathcal{F})=\emptyset$ and so $w\notin {\overline{\omega(z_0,\mathcal{F})}}.$
\medskip

(b) Choose $f_0\in\mathcal{F}$ and $w\in \omega(z_0,\mathcal{F}).$ Let $V$ be an open set containing $f_0(w).$ From continuity of $f_0,$ we can choose an open set $U$ containing $w$ such that $f_0(U)\subseteq V.$ We can assume that $U$ is bounded. Since $w$ is a limit point of $O(z_0,\mathcal{F}),$ there exists a sequence $\{f_n\}$ in $\mathcal{F}$ such that $f_n(z_0)\in U$ for all $n.$ Then $f_0\circ f_n(z_0)\in V$ for all $n.$ We claim that $f_0\circ f_n(z_0)$ is an infinite sequence. Suppose that it is a finite sequence, we may suppose if necessary, after renaming, $f_0\circ f_n(z_0)=c$ for all n. Since  $\{f_n(z_0)\}$  has a limit point, the identity theorem reduces $f_0$ to a constant, which is not true. Hence the claim, and so $f_0(w)\in \omega(z_0,\mathcal{F})$ showing that $\omega(z_0,\mathcal{F})$ is forward invariant.

\end{proof}
For an arbitrary family of functions $\omega(z_0,\mathcal{F})$ need not be forward invariant.
\begin{example}
    Consider the family $\mathcal{F}=\{z^n+1:n=1,2,\cdots\}.$ Then $\omega(1/2,\mathcal{F})=\{1\}$ and for any $f_n\in\mathcal{F}$ $f_n(1)\neq 1.$ 
\end{example}

A point $z_0$ is said to be a \textit{non-wandering} point of a family $\mathcal{F}$ of holomorphic functions on an open set $D$ if for every open neighbourhood $U$ of $z_0$, there exists a function $f \in \mathcal F$ such that $f(U) \cap U \neq \emptyset$.

By Montel's three point theorem, it is clear that $z_0 \in J(\mathcal F)$ implies that $z_0$ is non-wandering point of family $\mathcal F$. However the converse need not be true. For example, $0$ is non-wandering point for the family  $\mathcal{F}=\{f_n(z) = z^n: n\in\mathbb{N}\}$ and $0 \in F(\mathcal{F})$.

\medskip 

We denote by $\mathcal N_{\mathcal W}(\mathcal F)$, the set of all non-wandering points of family $\mathcal F$.

\begin{theorem}
Let $\mathcal F$ be a family of holomorphic functions from an open set $D$ to $\mathbb C$. Then $\mathcal N_{\mathcal W}(\mathcal F)$ is a closed set.
\end{theorem}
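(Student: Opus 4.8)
The plan is to prove the dual statement, that the complement of $\mathcal N_{\mathcal W}(\mathcal F)$ in $D$, namely the set of \emph{wandering} points, is open; closedness of $\mathcal N_{\mathcal W}(\mathcal F)$ then follows at once. First I would carefully negate the defining condition: a point $w \in D$ fails to be non-wandering precisely when there exists at least one open neighbourhood $U$ of $w$ (with $U \subseteq D$) such that $f(U) \cap U = \emptyset$ for every $f \in \mathcal F$. The crucial observation is that this ``witnessing'' neighbourhood $U$ depends on a single set, and the disjointness condition it satisfies makes no reference to $w$ itself beyond the fact that $w \in U$.

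The key step is then to exploit that the non-wandering condition is quantified over \emph{all} neighbourhoods, so its negation hands us one bad neighbourhood that serves every point inside it. Concretely, fix a wandering point $w$ together with a witnessing open set $U \ni w$ satisfying $f(U) \cap U = \emptyset$ for all $f \in \mathcal F$. For any other point $z \in U$, the same set $U$ is an open neighbourhood of $z$ (since $U$ is open and contains $z$), and the identical disjointness $f(U) \cap U = \emptyset$ continues to hold for all $f \in \mathcal F$. Hence $z$ is itself wandering, and so $U$ is an open set lying entirely in the complement of $\mathcal N_{\mathcal W}(\mathcal F)$.

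This shows that every wandering point has an open neighbourhood consisting only of wandering points, so the set of wandering points is open and $\mathcal N_{\mathcal W}(\mathcal F)$ is closed in $D$. I do not anticipate a serious obstacle here: unlike the earlier limit-set arguments, no continuity estimate or diagonal extraction is needed, because the witnessing neighbourhood transfers verbatim from a point to all of its neighbours. The only point requiring care is the bookkeeping of the logical quantifiers when negating the definition, and the implicit convention that the neighbourhoods under consideration are taken within $D$, so that closedness is understood relative to $D$.
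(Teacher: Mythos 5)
Your proof is correct. You show that the set of wandering points is open: the negation of the non-wandering condition hands you a single witnessing neighbourhood $U$ with $f(U)\cap U=\emptyset$ for all $f\in\mathcal F$, and that same $U$ witnesses wandering for every point it contains. The paper instead argues directly that $\mathcal N_{\mathcal W}(\mathcal F)$ contains its limit points: given $z_n\to z_0$ with each $z_n$ non-wandering and an arbitrary neighbourhood $U$ of $z_0$, it shrinks neighbourhoods $U_n$ of $z_n$ until $U_n\subseteq U$ and uses $f_n(U_n)\cap U_n\subseteq f_n(U)\cap U$. Both proofs rest on the same monotonicity of the condition $f(U)\cap U\neq\emptyset$ under enlarging $U$, but your complement-is-open formulation dispenses with sequences and shrinking radii entirely, and in fact yields slightly more: the wandering set is open, so each wandering point has a whole neighbourhood of wandering points. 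One small point of care, which you already flag: since $D$ need not be all of $\mathbb C$, the conclusion is that $\mathcal N_{\mathcal W}(\mathcal F)$ is closed \emph{in} $D$, which is also all the paper's argument establishes.
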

\begin{proof}
Let $z_0 \in \overline{\mathcal N_{\mathcal W}(\mathcal F)}$, and $U$ be an open neighbourhood of $z_0$. Then there exists a sequence $z_n \in \mathcal N_{\mathcal W}(\mathcal F)$ such that $z_n \rightarrow z_0$ as $n \rightarrow \infty$. Also $z_n \in \mathcal N_{\mathcal W}(\mathcal F)$ implies that for each open neighbourhood $U_n$ of $z_n$, there exists $f_n \in \mathcal F$ such that $f_n(U_n) \cap U_n \neq \emptyset$. Since $z_n \rightarrow z_0$ as $n \rightarrow \infty$, we can adjust the radius of $U_n$'s such that $U_n \subseteq U$ for large values of $n$. Therefore  $f_n(U) \subseteq f_n(U)$ so that $f_n(U_n)\cap U_n \subseteq f_n(U) \cap U$. This implies that $f_n(U) \cap U \neq \emptyset$, and thus $z_0 \in \mathcal N_{\mathcal W}(\mathcal F)$. Hence $\mathcal N_{\mathcal W}(\mathcal F)$ is closed.

\end{proof}


\section{topologically transitive Families of Functions}\label{sec:transitivityconditions}
In this section, we examine the sufficient conditions for transitivity in different families of functions. Additionally, we explore the circumstances under which transitivity can be transferred from one family of functions to another.
\begin{theorem}
  Suppose that $F(\mathcal{F})$ and $J\left(\mathcal{F}\right)$ are forward invariant under the family $\mathcal{F}$ of holomorphic functions on $D$. Then the restriction of $\mathcal{F}$ to $J\left(\mathcal{F}\right)$ is topologically transitive with respect to $J\left(\mathcal{F}\right).$
\end{theorem}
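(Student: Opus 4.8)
The plan is to argue through Montel's three-point theorem, using the two forward-invariance hypotheses to pass between the family on an ambient open set and its restriction to $J(\mathcal{F})$. Fix a nonempty relatively open set $U \subseteq J(\mathcal{F})$ and an open set $V$ with $V \cap J(\mathcal{F}) \neq \emptyset$; the goal is to produce $f \in \mathcal{F}$ with $f(U) \cap V \neq \emptyset$. First I would choose an open set $U' \subseteq D$ with $U' \cap J(\mathcal{F}) = U$. Since $U$ is nonempty, $U'$ meets $J(\mathcal{F}) = D \setminus F(\mathcal{F})$, and because $F(\mathcal{F})$ is exactly the set on which $\mathcal{F}$ is normal, the family $\{f|_{U'} : f \in \mathcal{F}\}$ fails to be normal on $U'$.

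Next I would apply Montel's three-point theorem to this non-normal family: it cannot omit three values, so the exceptional set $E := \mathbb{C}_\infty \setminus \bigcup_{f \in \mathcal{F}} f(U')$ contains at most two points. The two forward-invariance assumptions now do the essential work. On the one hand, $f(J(\mathcal{F})) \subseteq J(\mathcal{F})$ gives $f(U) \subseteq J(\mathcal{F})$ for every $f$, so the restricted family genuinely maps $J(\mathcal{F})$ into itself and the statement is meaningful. On the other hand, $f(F(\mathcal{F})) \subseteq F(\mathcal{F})$ yields the contrapositive that $f(\zeta) \in J(\mathcal{F})$ forces $\zeta \in J(\mathcal{F})$. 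Consequently, whenever a point $w \in J(\mathcal{F})$ is attained as $w = f(\zeta)$ with $\zeta \in U'$, the source automatically lies in $U' \cap J(\mathcal{F}) = U$.

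It then suffices to exhibit a point $w \in (V \cap J(\mathcal{F})) \setminus E$: for such $w$ we have $w = f(\zeta)$ for some $f$ and some $\zeta \in U'$, and the previous step forces $\zeta \in U$, whence $w \in f(U) \cap V$ as required. To guarantee that such a $w$ exists I would use that $V \cap J(\mathcal{F})$ has more than two points, which follows once $J(\mathcal{F})$ has no isolated points: any point of $V \cap J(\mathcal{F})$ is then a limit point of $J(\mathcal{F})$, and since $V$ is open, $V \cap J(\mathcal{F})$ is infinite and so cannot be contained in the at-most-two-point set $E$.

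The step I expect to be the main obstacle is precisely this final transfer. Montel immediately tells us that all but at most two points of $\mathbb{C}_\infty$ lie in some $f(U')$, but to conclude $f(U) \cap V \neq \emptyset$ we must realize the image inside $J(\mathcal{F})$ rather than merely inside $V$: a hit landing in $V \cap F(\mathcal{F})$ is useless, since its preimage may sit in $U' \cap F(\mathcal{F})$. The forward invariance of $F(\mathcal{F})$ fixes this directionality, but the argument genuinely requires that $V \cap J(\mathcal{F})$ be too large to be swallowed by the two exceptional values; thus establishing (or assuming) that relatively open pieces of $J(\mathcal{F})$ are infinite is the crux on which the whole proof rests.
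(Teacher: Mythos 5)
Your argument is essentially the paper's own proof: pass from the relatively open set $U\subseteq J(\mathcal{F})$ to an ambient open set $U'$ meeting $J(\mathcal{F})$, invoke non-normality there together with Montel's three-point theorem to conclude that $\bigcup_{f\in\mathcal{F}}f(U')$ omits at most two points of $\mathbb{C}_\infty$, and then use forward invariance of $F(\mathcal{F})$ (in contrapositive form) to force any preimage of a point of $J(\mathcal{F})$ to lie in $U'\cap J(\mathcal{F})=U$. Up to that point the two proofs coincide step for step.

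The ``crux'' you flag at the end is a genuine issue, and you are right not to wave it away: to finish, you need a point of $V\cap J(\mathcal{F})$ outside the exceptional set $E$ with $|E|\le 2$, and this requires $V\cap J(\mathcal{F})$ to contain more than two points. You reduce this to $J(\mathcal{F})$ having no isolated points, but you neither prove that nor is it among the hypotheses; for a general family (as opposed to the iterates of a single map, where perfectness of the Julia set is a theorem requiring real work), forward invariance of $F(\mathcal{F})$ and $J(\mathcal{F})$ does not obviously preclude an isolated point of $J(\mathcal{F})$, for which a small $V$ would be a singleton possibly contained in $E$. So your proposal is incomplete at exactly the point you identify. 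It is worth noting that the paper's proof has the same gap but does not acknowledge it: it asserts $J(\mathcal{F})\subset\bigcup_{f\in\mathcal{F}}f(G)$ outright, when the Montel argument only yields $J(\mathcal{F})\setminus E\subset\bigcup_{f\in\mathcal{F}}f(G)$. Your write-up is therefore the more honest of the two, but neither closes the argument without an additional assumption or lemma guaranteeing that nonempty relatively open subsets of $J(\mathcal{F})$ are infinite.
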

\begin{proof} If $J\left(\mathcal{F}\right)$ is empty, there is nothing to prove.
  Let $G$ and $V$ be an open sets in $J\left(\mathcal{F}\right).$  Then we can write $G=U\cap J\left(\mathcal{F}\right)$ for some open set $U$ in $\mathbb{C}.$  So the set $\mathbb{C}\setminus\cup_{f\in\mathcal{F}}f(U)$ contains at most two points. Forward invariance of $F(\mathcal{F})$ and $J\left(\mathcal{F}\right)$ implies that 
  $$J(\mathcal{F})\subset \cup_{f\in\mathcal{F}}f(G).$$ Since $V\subset J(\mathcal{F})\subset \cup_{f\in\mathcal{F}}f(G),$ there is some $f\in\mathcal{F}$ such that $V\cap f(G)\neq \emptyset.$
\end{proof}

\bigskip 

Following result is another characterization of topologically transitive family:
\begin{theorem}
A family $\mathcal F \subseteq C(D, \mathbb C)$ is topologically transitive if and only if for any open set $V \subseteq \mathbb C$, the set $\{z \in D : f(z) \in V, \mbox{ for some }f \in \mathcal F\}$ is dense in $D$.
\end{theorem}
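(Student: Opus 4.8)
The plan is to unwind both definitions and observe that, for each fixed pair of open sets, the two conditions say literally the same thing. Writing $S_V := \{z \in D : f(z) \in V \mbox{ for some } f \in \mathcal F\}$, the central observation is that for a nonempty open $U \subseteq D$ and an open $V \subseteq \mathbb{C}$, the existence of some $f \in \mathcal F$ with $f(U) \cap V \neq \emptyset$ is equivalent to $U \cap S_V \neq \emptyset$. Indeed, $f(U) \cap V \neq \emptyset$ means there is a point $z \in U$ with $f(z) \in V$, and such a $z$ is exactly a point of $U \cap S_V$; conversely, any $z \in U \cap S_V$ supplies an $f$ with $f(z) \in f(U) \cap V$.

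First I would establish the forward implication. Assuming $\mathcal F$ is topologically transitive, I fix a nonempty open $V \subseteq \mathbb{C}$ and take an arbitrary nonempty open $U \subseteq D$. Transitivity furnishes some $f \in \mathcal F$ with $f(U) \cap V \neq \emptyset$, so by the observation above $U \cap S_V \neq \emptyset$. As $U$ ranges over all nonempty open subsets of $D$, this shows $S_V$ meets every such set and is therefore dense in $D$. For the converse, I fix arbitrary nonempty open sets $U \subseteq D$ and $V \subseteq \mathbb{C}$; density of $S_V$ gives $U \cap S_V \neq \emptyset$, and the same observation then produces an $f \in \mathcal F$ with $f(U) \cap V \neq \emptyset$, which is precisely topological transitivity.

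I do not expect a genuine obstacle, since the statement is essentially a reformulation of transitivity through the ``large preimage'' sets $S_V$. The one point meriting care is the degenerate case: the density condition must be read for nonempty $V$, as $S_\emptyset = \emptyset$ can never be dense in $D$; this is consistent with the convention $V \cap B \neq \emptyset$ (here $B = Y = \mathbb{C}$) built into the definition of transitivity. No analytic or topological input beyond the definition of density is needed, so continuity and holomorphy play no role in the argument.
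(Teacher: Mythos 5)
Your proposal is correct and follows essentially the same route as the paper's proof: both identify the set in question with $\bigcup_{f\in\mathcal F}f^{-1}(V)$ and observe that $f(U)\cap V\neq\emptyset$ for some $f$ is the same as $U$ meeting that union. Your explicit remark about the degenerate case $V=\emptyset$ is a small point of care that the paper leaves implicit.
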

\begin{proof}
Suppose that $\mathcal F$ is topologically transitive. For any open set $V \subseteq \mathbb C$, we can write the set $\{z \in D : f(z) \in V, \mbox{ for some }f \in \mathcal F\}$ as 
$\bigcup\limits_{f \in \mathcal F}f^{-1}(V) = W (\text{say}).$ Let $U$ be any open set in $D$. Since $\mathcal F$ is topologically transitive, there exists a function $f \in \mathcal F$ such that $f(U) \cap V \neq \emptyset$. Take a point $z_0 \in f(U) \cap V$. Then $z_0 = f(\zeta)$ for some $\zeta \in U$ and $z_0 \in V$,  so that $\zeta \in f^{-1}(V)$. This shows that $U \cap W \neq \emptyset$, and hence $W$ is dense in $D$.

\medskip

For the converse, consider a pair of open sets $U \subseteq D$ and $V\subseteq \mathbb C$. Then the set $\{z \in D: f(z) \in V \mbox{ for some } f\in\mathcal{F}\}$ intersects  $U$. Therefore, we can choose a point $z \in U$ such that $f(z) \in V$, and so $f(z) \in f(U) \cap V$. This shows that $f(U) \cap V \neq \emptyset$, and hence $\mathcal F$ is topologically transitive.
\end{proof}
Next, we consider expanding families of  meromorphic functions.
 A family $\mathcal{F}$ of meromorphic functions on $D$ is said to be  {\it expanding at $z_0 \in D$} with respect to a set $A \subset Y$ if for every open neighbourhood $U$ of $z_0$ and every compact set $K \subset A$ we have $K \subset f(U)$ for infinitely many $f \in \mathcal{F}$ (see \cite{meyrath2022}).

Our next result shows that if a family $\mathcal{F} $   of meromorphic functions on an open set  $D$ is expanding at point $z_0$ with respect to the complement of a set with empty interior, then the family is topologically transitive.
\begin{theorem}
Let $\mathcal{F} $ be a family   of meromorphic functions on an open set  $D$ in $\mathbb{C}$. If $\mathcal F$ is expanding at point $z_0$ in $D$ with respect to $\mathbb C_{\infty} \setminus E$, where $E^\circ = \emptyset$, then $\mathcal F$ is topologically transitive at $z_0$.
\end{theorem}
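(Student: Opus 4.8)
The plan is to unfold the definition of topological transitivity at $z_0$ and reduce it directly to the expanding hypothesis. I must show that for every open neighbourhood $U$ of $z_0$ in $D$ and every open set $V \subseteq \mathbb{C}_\infty$ with $V \neq \emptyset$, there exists $f \in \mathcal{F}$ with $f(U) \cap V \neq \emptyset$. (Since $B = \mathbb{C}_\infty$ here, the ``with respect to'' phrase is dropped, and the condition $V \cap B \neq \emptyset$ just says $V$ is a nonempty open set.) First I would observe that the real content is to locate, inside the given open set $V$, a single compact set that the expanding property can act on.

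The key step is to exploit the hypothesis $E^\circ = \emptyset$. Since $E$ has empty interior, its complement $\mathbb{C}_\infty \setminus E$ is dense in $\mathbb{C}_\infty$; in particular the nonempty open set $V$ must meet $\mathbb{C}_\infty \setminus E$. So I would pick a point $w \in V \cap (\mathbb{C}_\infty \setminus E)$, and then, using that $V$ is open, choose a small closed ball (in the spherical metric) $K$ centred at $w$ with $K \subseteq V$ and still $K \subseteq \mathbb{C}_\infty \setminus E$. The second containment requires a little care: a closed ball around $w$ need not avoid $E$ merely because $w \notin E$, since $E$ need not be closed. I would handle this by shrinking the radius so that $K \subseteq V$, and then set $K' = K \cap (\mathbb{C}_\infty \setminus E)$ if necessary — but the cleanest route is to select $K$ to be any single nondegenerate compact subset of $V \setminus E$; for instance, take two distinct points of the dense set $V \cap (\mathbb{C}_\infty \setminus E)$ and join them, or simply take the one-point compact set $K = \{w\}$, which already lies in $A := \mathbb{C}_\infty \setminus E$.

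With such a compact $K \subset A$ and $K \subseteq V$ in hand, the expanding hypothesis applies verbatim: since $\mathcal{F}$ is expanding at $z_0$ with respect to $A = \mathbb{C}_\infty \setminus E$, for the neighbourhood $U$ and the compact set $K$ we get $K \subseteq f(U)$ for infinitely many $f \in \mathcal{F}$. Fixing any one such $f$, we have $K \subseteq f(U)$ and $K \subseteq V$, hence $\emptyset \neq K \subseteq f(U) \cap V$, so in particular $f(U) \cap V \neq \emptyset$. As $U$ and $V$ were arbitrary, this establishes topological transitivity of $\mathcal{F}$ at $z_0$.

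The main obstacle is the technical point of producing the compact set $K$ simultaneously inside $V$ and inside $A = \mathbb{C}_\infty \setminus E$: one needs the density of $A$ (equivalently $E^\circ = \emptyset$) precisely to guarantee $V \cap A \neq \emptyset$, and then care that a genuinely compact subset of $A$ lands inside $V$. Using a singleton $K = \{w\}$ with $w \in V \cap A$ sidesteps the subtlety entirely, since a point of $A$ lying in the open set $V$ is automatically a compact subset of both. Everything else is a direct substitution into the expanding definition.
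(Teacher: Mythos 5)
Your proof is correct and follows essentially the same route as the paper: both arguments use $E^\circ=\emptyset$ to find a nonempty intersection $V\cap(\mathbb{C}_\infty\setminus E)$, pick a compact set $K$ inside it, and apply the expanding hypothesis to get $K\subseteq f(U)$ for some $f\in\mathcal{F}$. Your observation that a singleton $K=\{w\}$ suffices is a clean way to dispose of the minor technicality the paper glosses over.
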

\begin{proof}
Suppose that $\mathcal F$ is expanding at point $z_0$ in $D$ with respect to $\mathbb C_{\infty} \setminus E$, where $E^\circ = \emptyset$. Then for each open neighbourhood $U$ of $z_0$ and a compact set $K \subset \mathbb C_\infty \setminus E$, $K \subset f(U)$ for infinitely many $f \in \mathcal F$. Since $E^\circ = \emptyset$, for each open set $V$ in $\mathbb C_\infty$ we have $V \cap \mathbb C_\infty \setminus E \neq \emptyset$. So, we can choose a compact set $K \subset V \cap \mathbb C_\infty \setminus E$, and therefore by above argument $K \subseteq f(U)$ for infinitely many $f \in \mathcal F$. Thus $f(U) \cap V \neq \emptyset$, and hence $\mathcal F$ is topologically transitive at $z_0$.
\end{proof}
\begin{lemma}\label{lem:compacttransitive} Let $\mathcal{F}\subseteq C(D,\mathbb{C})$ be a compact family. Suppose that $\mathcal{F}$ is topologically transitive at $z_0.$ Then for any open set $U$ in $D$ containing $z_0$ and an open set $V$ in $\mathbb{C},$ there exists $f\in\mathcal{F}$ such that $f(z_0)\in f(U)\cap V.$
\end{lemma}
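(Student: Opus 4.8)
The plan is to promote topological transitivity at $z_0$ — which only asserts that $f(U)$ meets $V$ for \emph{some} neighbourhood $U$ — into the stronger statement that the orbit value $f(z_0)$ itself can be placed inside $V$. Since $z_0 \in U$, the inclusion $f(z_0) \in f(U)$ is automatic, so the real content is to produce a single $f \in \mathcal{F}$ with $f(z_0) \in V$. The engine driving this is a shrinking-neighbourhood construction combined with the compactness (hence normality and closedness under locally uniform limits) of $\mathcal{F}$.

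First I would fix a nonempty open set $V$ and, at the very outset, replace it by a smaller open ball $B(w, r)$ with $\overline{B(w, r)} \subseteq V$; this harmless reduction is exactly what makes the eventual limiting argument go through. Then, for each $n$, I would set $U_n = U \cap B(z_0, 1/n)$, an open neighbourhood of $z_0$, and apply topological transitivity of $\mathcal{F}$ at $z_0$ to the pair $(U_n, B(w, r))$. This yields $f_n \in \mathcal{F}$ and a point $\zeta_n \in U_n$ with $f_n(\zeta_n) \in B(w, r)$, and by construction $\zeta_n \to z_0$ as $n \to \infty$.

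Next I would exploit compactness of $\mathcal{F}$: normality furnishes a subsequence $f_{n_k}$ converging locally uniformly on $D$ to some limit function $f$, and closedness of the compact family forces $f \in \mathcal{F}$. The crucial estimate is that locally uniform convergence $f_{n_k} \to f$ together with $\zeta_{n_k} \to z_0$ gives $f_{n_k}(\zeta_{n_k}) \to f(z_0)$; this follows by the splitting $|f_{n_k}(\zeta_{n_k}) - f(z_0)| \leq |f_{n_k}(\zeta_{n_k}) - f(\zeta_{n_k})| + |f(\zeta_{n_k}) - f(z_0)|$, where the first term is controlled by uniform convergence on a fixed compact neighbourhood of $z_0$ containing all but finitely many $\zeta_{n_k}$, and the second by continuity of $f$ at $z_0$. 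Since each $f_{n_k}(\zeta_{n_k})$ lies in the closed set $\overline{B(w, r)}$, the limit satisfies $f(z_0) \in \overline{B(w, r)} \subseteq V$, and combined with $f(z_0) \in f(U)$ this proves the lemma.

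The hard part will be the limiting step, not the bookkeeping. A priori the points $f_n(\zeta_n) \in V$ could accumulate on the boundary $\partial V$, so naively passing to the limit inside the open set $V$ would fail to guarantee $f(z_0) \in V$. Shrinking $V$ to a ball whose closure is compactly contained in $V$ is precisely the device that circumvents this, trading the open target for a closed one that still lies inside $V$. A secondary technical point is justifying $f_{n_k}(\zeta_{n_k}) \to f(z_0)$ for a \emph{moving} sequence of arguments, which genuinely requires locally uniform — not merely pointwise — convergence; this is exactly what the normality built into the compact family supplies.
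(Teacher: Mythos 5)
Your proposal is correct and follows essentially the same route as the paper's proof: shrink $V$ to an open set with closure contained in $V$, apply transitivity to shrinking neighbourhoods $U\cap B(z_0,1/n)$, extract a locally uniformly convergent subsequence via compactness of $\mathcal{F}$, and pass to the limit to get $f(z_0)\in\overline{B(w,r)}\subseteq V$. Your explicit justification of $f_{n_k}(\zeta_{n_k})\to f(z_0)$ for moving arguments is a detail the paper states without proof, but the argument is the same.
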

\begin{proof}
Consider a pair of open sets $U\subseteq D$, $V\subseteq \mathbb{C}$ and $z_0\in U.$ Choose an open set $G$ such that ${\overline{G}}\subseteq V.$ Without loss of generality, we can assume that the open disks $D(z_0,1/n)$ are contained in $U$ for all $n.$ Since $\mathcal{F}$ is topologically transitive, we get a sequence $\{f_n\}$ in $\mathcal{F}$ and a sequence ${z_n}\in D(z_0, 1/n)$ such that $f_n(z_n)$ is in $ f_n(D(z_0,1/n))\cap G.$
Compactness of $\mathcal{F}$ implies that $\{f_n\}$ has subsequence, we rename it again $\{f_n\},$ which converges locally uniformly to some $f\in\mathcal{F}$ in $D.$ Since $z_n\to z_0$ as $n\to\infty,$ we see that $f_n(z_n)\to f(z_0)$ as $n\to\infty.$ So $f(z_0)\in{\overline{G}}$, since $f_n(z_n)\in G$ for all $n.$ Hence $f(z_0)\in f(U)\cap V.$
 \end{proof}
Next we investigate the various conditions under which transitivity can be borrowed from one family of functions to another.
 \begin{theorem}
     Let $\mathcal{F}\subseteq C(D,\mathbb{C})$ be a compact family and $\mathcal{G}\subseteq C(D,\mathbb{C})$ be such that for each $f\in\mathcal{F}$ there is $g\in\mathcal{G}$ satisfying $f(z_0)=g(z_0).$ Then transitivity of $\mathcal{F}$  at $z_0$ implies transitivity of $\mathcal{G}$ at $z_0.$  
 \end{theorem}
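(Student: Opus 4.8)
The plan is to reduce everything to Lemma \ref{lem:compacttransitive}, which is the only place where the compactness of $\mathcal{F}$ will be used. Recall that to establish transitivity of $\mathcal{G}$ at $z_0$, I must show that for every open set $U \subseteq D$ containing $z_0$ and every open set $V \subseteq \mathbb{C}$, there is some $g \in \mathcal{G}$ with $g(U) \cap V \neq \emptyset$. So I would begin by fixing such a pair $U$ and $V$.

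First, I would apply Lemma \ref{lem:compacttransitive} to the compact, topologically transitive family $\mathcal{F}$ at the point $z_0$. This yields a function $f \in \mathcal{F}$ with $f(z_0) \in f(U) \cap V$. The essential gain here is that the value witnessing transitivity is $f(z_0)$ itself, rather than merely some value $f(\zeta)$ with $\zeta \in U$; in particular $f(z_0) \in V$.

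Next, I would invoke the hypothesis linking $\mathcal{F}$ and $\mathcal{G}$: for this $f$ there exists $g \in \mathcal{G}$ with $g(z_0) = f(z_0)$. Since $z_0 \in U$, we have $g(z_0) \in g(U)$, and since $g(z_0) = f(z_0) \in V$, it follows that $g(z_0) \in g(U) \cap V$. Hence $g(U) \cap V \neq \emptyset$, which is exactly the required condition, so $\mathcal{G}$ is topologically transitive at $z_0$.

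The key observation that makes the argument work—and the point where care is needed—is that the hypothesis relating $\mathcal{F}$ and $\mathcal{G}$ only controls the two families at the single point $z_0$. Plain transitivity of $\mathcal{F}$ would only guarantee some $\zeta \in U$ (possibly different from $z_0$) with $f(\zeta) \in V$, and about $g(\zeta)$ the hypothesis says nothing. It is precisely Lemma \ref{lem:compacttransitive}, where the compactness of $\mathcal{F}$ is exploited, that upgrades transitivity to a statement about the value $f(z_0)$, thereby making the pointwise comparison with $\mathcal{G}$ effective. I do not anticipate any other obstacle, as the remaining steps are immediate.
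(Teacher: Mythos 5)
Your proposal is correct and follows exactly the same route as the paper: apply Lemma \ref{lem:compacttransitive} to obtain $f\in\mathcal{F}$ with $f(z_0)\in f(U)\cap V$, then pass to $g\in\mathcal{G}$ with $g(z_0)=f(z_0)$ to conclude $g(z_0)\in g(U)\cap V$. Your added remark about why plain transitivity would not suffice is a helpful clarification, but the argument itself is the paper's.
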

 \begin{proof}
    Consider a pair of open sets $U\subseteq D$, $V\subseteq \mathbb{C}$ and $z_0\in U.$ By Lemma \ref{lem:compacttransitive}, there is an $f\in\mathcal{F}$ such that $f(z_0)\in f(U)\cap V.$ Let $g\in\mathcal{G}$ be such that $f(z_0)=g(z_0).$ Then $g(z_0)\in g(U)\cap V.$ Hence $\mathcal{G}$ is topologically transitive.
 \end{proof}
 \begin{theorem}\label{Thm:continuityliketransitivity} Let $\mathcal{F}, \mathcal{G} \subseteq C(D,\mathbb{C})$ and suppose that $\mathcal{F}$ is topologically transitive on $D$ and that each $f \in \mathcal{F}$ is an open map. If for each $f \in \mathcal{F}$ there exists a $g \in \mathcal{G}$ such that for every $\epsilon > 0$ there exists a $\delta > 0$ satisfying $\left|f(z) - g(w)\right| < \epsilon$ whenever $|z - w| < \delta$, then $\mathcal{G}$ is topologically transitive on $D.$
\end{theorem}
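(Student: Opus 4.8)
The plan is to verify the defining condition of topological transitivity for $\mathcal{G}$ directly, using the transitivity of $\mathcal{F}$ to produce a point whose $f$-image lands in the target set, and then transporting that point to $\mathcal{G}$ through the approximation hypothesis. So I would fix arbitrary non-empty open sets $U \subseteq D$ and $V \subseteq \mathbb{C}$; the goal is to exhibit some $g \in \mathcal{G}$ with $g(U) \cap V \neq \emptyset$. Since $\mathcal{F}$ is topologically transitive on $D$, there is an $f \in \mathcal{F}$ with $f(U) \cap V \neq \emptyset$, and I fix a point $z_0 \in U$ with $f(z_0) \in V$.

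Next I would use the openness hypothesis to create room around the target value. Because $f$ is an open map, $f(U)$ is open, so $f(U) \cap V$ is a non-empty open set containing $f(z_0)$; hence there is an $\epsilon > 0$ with the open disk $B(f(z_0), \epsilon)$ contained in $f(U) \cap V$, and in particular in $V$. For the function $f$ just selected, the hypothesis supplies a matching $g \in \mathcal{G}$; applying the approximation property of the pair $(f, g)$ to this $\epsilon$ yields a $\delta > 0$ such that $|f(z) - g(w)| < \epsilon$ whenever $|z - w| < \delta$.

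It then remains to produce a single argument that simultaneously lies in $U$ and is close enough to $z_0$. Since $U$ is open and $z_0 \in U$, I can choose $w \in U$ with $|w - z_0| < \delta$ (for instance $w = z_0$ itself works). Taking $z = z_0$ in the approximation inequality gives $|f(z_0) - g(w)| < \epsilon$, so $g(w) \in B(f(z_0), \epsilon) \subseteq V$. As $w \in U$, this shows $g(w) \in g(U) \cap V$, so $g(U) \cap V \neq \emptyset$. Since $U$ and $V$ were arbitrary, $\mathcal{G}$ is topologically transitive on $D$.

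The computation is short, so the only genuine point to get right is the correct reading and invocation of the joint $\epsilon$-$\delta$ condition: it must be used in its diagonal form, with $z$ pinned at $z_0$ and $w$ ranging over a neighbourhood of $z_0$, so that $g$ sends points near $z_0$ to within $\epsilon$ of $f(z_0)$. The two openness hypotheses are exactly what guarantee the needed slack: openness of $V$ (and of $f(U)$) secures the disk $B(f(z_0), \epsilon) \subseteq V$, while openness of $U$ secures a valid choice of $w \in U$ with $|w - z_0| < \delta$. Everything else is routine, and I do not anticipate a substantial obstacle beyond this bookkeeping.
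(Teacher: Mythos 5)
Your proof is correct and follows essentially the same route as the paper's: use transitivity of $\mathcal{F}$ to get $f$ and $z_0\in U$ with $f(z_0)\in V$, use openness of $f$ to fit a disk $B(f(z_0),\epsilon)$ inside $f(U)\cap V$, and then invoke the $\epsilon$--$\delta$ hypothesis to land $g(z_0)$ in that disk. Your observation that $w=z_0$ itself already works even streamlines the paper's argument, which routes through an auxiliary neighbourhood $D(z_0,\delta)$.
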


\begin{proof} Let $U \subseteq D$ and $V \subseteq \mathbb C$ be a pair of open sets. By the transitivity of $\mathcal{F}$, there exists $f \in \mathcal{F}$ such that $f(U) \cap V \neq \emptyset$. Since $f(U)$ is an open set and $f$ is continuous, we can choose an open disk $D(f(z_0), \epsilon)$ in $f(U) \cap V$ such that there exists an open set $G \subseteq U$ containing $z_0$ with $f(G) \subseteq D(f(z_0), \epsilon)$. From the hypothesis, there exists $g \in \mathcal{G}$ and $\delta > 0$ such that $|f(z_0) - g(z)| < \epsilon$ whenever $|z_0 - z| < \delta$. This implies $g\left(D(z_0, \delta)\right) \subseteq D(f(z_0), \epsilon) \subseteq V$. Hence, $g(U) \cap V \neq \emptyset$. This shows that $\mathcal{G}$ is topologically transitive on $D.$ \end{proof}
\section{Alternate Proof of Montel's Theorem}\label{sec:montelstheorem}
Montel's Theorem is a pivotal result in complex analysis concerning normal families of functions.   In this section, we provide an alternate proof of Montel's theorem. 

 A family $\mathcal{F}$ of meromorphic functions is said to be\textit{ weakly mixing} with respect to $z_0$ if for any open neighbourhood $U$ of $z_0$ and a pair of open sets $V_1, V_2$ there is $f\in \mathcal{F}$ such that $f(U)\cap V_i\neq \emptyset$ (see \cite{meyrath2022} ).

T. Meyrath \cite{meyrath2022} in 2022 obtained the following criterion:
\begin{theorem}\label{meyrath}
   A family  $\mathcal{F}$ of meromorphic functions is non normal at $z_0$ if and only if $\mathcal{F}$ is weakly mixing with respect to $z_0$
\end{theorem}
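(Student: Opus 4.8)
The plan is to prove both implications working in the chordal (spherical) metric $\chi$ on $\mathbb{C}_\infty$, since meromorphic functions are naturally continuous maps into $\mathbb{C}_\infty$ and normality is understood in this metric. For the easy implication I would argue by contraposition: assume $\mathcal{F}$ is normal at $z_0$ and show it cannot be weakly mixing. Fix two open sets $V_1, V_2$ whose closures are disjoint and both bounded away from $\infty$ (two small chordal disks). If $\mathcal{F}$ were weakly mixing at $z_0$, then applying the definition to the shrinking neighbourhoods $U_n = D(z_0, 1/n)$ would produce functions $f_n \in \mathcal{F}$ and points $a_n, b_n \in U_n$ with $f_n(a_n) \in V_1$ and $f_n(b_n) \in V_2$, where $a_n, b_n \to z_0$. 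Normality at $z_0$ lets me pass to a subsequence converging locally uniformly (in $\chi$) to a limit $f$ on a neighbourhood of $z_0$; since $a_n, b_n \to z_0$, the triangle inequality forces $f_n(a_n) \to f(z_0)$ and $f_n(b_n) \to f(z_0)$, so $f(z_0) \in \overline{V_1} \cap \overline{V_2} = \emptyset$, a contradiction. Hence weak mixing fails, giving one direction.

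The substantive direction is that non-normality at $z_0$ forces weak mixing. Here the plan is to invoke the Zalcman rescaling lemma: since $\mathcal{F}$ is not normal at $z_0$, there exist $f_n \in \mathcal{F}$, points $z_n \to z_0$, and scales $\rho_n \to 0^+$ such that $g_n(\zeta) := f_n(z_n + \rho_n \zeta)$ converges locally uniformly on $\mathbb{C}$ (in $\chi$) to a \emph{non-constant} meromorphic function $g$. I would then show that $g(\mathbb{C})$ is dense in $\mathbb{C}_\infty$: if $g$ omitted some nonempty open set, then after a Möbius change of coordinates one may assume $g$ omits a disk $D(c, r)$, whence $1/(g - c)$ has only removable singularities (at the poles of $g$ it tends to $0$) and is bounded by $1/r$, hence is a bounded entire function, hence constant by Liouville's theorem, contradicting that $g$ is non-constant. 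It is worth stressing that this step uses only Liouville's theorem, not Picard's or Montel's theorem, which is precisely what keeps the later alternate proof of Montel non-circular.

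To finish, given a neighbourhood $U$ of $z_0$ and arbitrary open sets $V_1, V_2$, I would use density of $g(\mathbb{C})$ to pick $\zeta_1, \zeta_2 \in \mathbb{C}$ with $g(\zeta_i) \in V_i$. Local uniform convergence $g_n \to g$ together with the openness of each $V_i$ gives $g_n(\zeta_i) \in V_i$ for all large $n$, while $z_n + \rho_n \zeta_i \to z_0$ guarantees $z_n + \rho_n \zeta_i \in U$ for large $n$. Choosing a single large $n$ and setting $f = f_n$ then yields one function $f \in \mathcal{F}$ with $f(U) \cap V_1 \neq \emptyset$ and $f(U) \cap V_2 \neq \emptyset$, which is exactly weak mixing at $z_0$.

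I expect the main obstacle to be the rescaling step that produces the non-constant meromorphic limit $g$: this rests on the spherical-derivative (Marty) characterization of non-normality combined with the Zalcman rescaling construction, and one must verify that this machinery is developed independently of Montel's three-point theorem, so that the subsequent alternate proof of Montel is genuinely non-circular. By contrast, the density-of-image step and the pull-back through $g_n \to g$ are routine once Liouville's theorem and the rescaling are in hand.
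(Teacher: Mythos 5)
Your proof is correct, but there is nothing in the paper to compare it against: Theorem \ref{meyrath} is stated as a quoted result of Meyrath and M\"{u}ller \cite{meyrath2022} and is used as a black box in Section \ref{sec:montelstheorem}, with no proof supplied. Your argument is a legitimate, self-contained route. The contrapositive direction (normal $\Rightarrow$ not weakly mixing) via two chordal disks with disjoint closures, shrinking neighbourhoods $U_n=D(z_0,1/n)$, and spherical locally uniform convergence is sound, since the limit is a continuous map into $\mathbb{C}_\infty$ and forces $f(z_0)\in\overline{V_1}\cap\overline{V_2}=\emptyset$. The substantive direction correctly reduces weak mixing to the density in $\mathbb{C}_\infty$ of the image of the non-constant Zalcman limit $g$, which you get from Liouville's theorem alone (handling the case where the omitted open set contains $\infty$ by noting $g$ is then a bounded entire function). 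This is the same circle of ideas as the cited source, which also proceeds by Zalcman--Lohwater--Pommerenke rescaling but proves the stronger equivalence recorded here as Theorem \ref{mcriteria}: non-normality at $z_0$ is equivalent to the family being expanding at $z_0$ with respect to $\mathbb{C}_\infty\setminus E$ with $|E|\le 2$, from which weak mixing follows at once because every nonempty open set meets $\mathbb{C}_\infty\setminus E$. Your closing observation about non-circularity is apt and worth recording: Marty's criterion and the Zalcman lemma rest on Arzel\`a--Ascoli, not on Montel's three-point theorem, so the alternate proof of Montel in Section \ref{sec:montelstheorem} is genuinely non-circular. One small point of hygiene: in both directions you implicitly take $V_1,V_2$ nonempty, which is the only reading under which the definition of weak mixing is non-vacuous, so this should be stated.
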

By using Theorem \ref{meyrath}, we give an alternate proof of Montel's theorem as:
\begin{theorem}
Let $\mathcal{F}$ be a locally bounded family of holomorphic functions at $z_0,$ then $\mathcal{F}$ is normal at $z_0.$
\end{theorem}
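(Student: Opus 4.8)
The plan is to argue by contraposition and invoke Meyrath's criterion (Theorem \ref{meyrath}), which converts non-normality into the geometric statement that $\mathcal{F}$ is weakly mixing at $z_0$. The whole point is that weak mixing forces the images $f(U)$ to reach arbitrarily prescribed open targets, whereas local boundedness pins these images inside a fixed disk; these two requirements are irreconcilable.

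Concretely, I would first suppose, for the sake of contradiction, that $\mathcal{F}$ is not normal at $z_0$. By Theorem \ref{meyrath}, $\mathcal{F}$ is then weakly mixing with respect to $z_0$: for every open neighbourhood $U$ of $z_0$ and every pair of open sets $V_1, V_2$ there exists some $f \in \mathcal{F}$ with $f(U) \cap V_i \neq \emptyset$ for $i = 1, 2$.

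Next I would exploit local boundedness directly. By hypothesis there is an open neighbourhood $U$ of $z_0$ and a constant $M > 0$ such that $|f(z)| \leq M$ for all $z \in U$ and all $f \in \mathcal{F}$; equivalently, $f(U) \subseteq \overline{D(0, M)}$ for every $f \in \mathcal{F}$. Now I would choose an open set disjoint from this disk, say $V = \{w \in \mathbb{C} : |w| > M\}$, and apply the weak mixing property with this same $U$ and with $V_1 = V_2 = V$. This produces an $f \in \mathcal{F}$ satisfying $f(U) \cap V \neq \emptyset$, which contradicts $f(U) \subseteq \overline{D(0,M)}$ since $\overline{D(0,M)}$ and $V$ are disjoint. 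Hence $\mathcal{F}$ must be normal at $z_0$.

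I do not anticipate a genuine obstacle once Theorem \ref{meyrath} is granted: the argument reduces to the elementary observation that a uniformly bounded family cannot have images escaping to a neighbourhood of $\infty$. The only point requiring minor care is the quantifier order in the definition of weak mixing—one must invoke it with the specific neighbourhood $U$ on which boundedness holds, and it suffices to take the two target sets equal, since a single violated incidence $f(U) \cap V \neq \emptyset$ already delivers the contradiction.
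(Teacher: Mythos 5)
Your proposal is correct and follows essentially the same route as the paper: both invoke Theorem \ref{meyrath} to convert non-normality into weak mixing at $z_0$, then apply weak mixing with $V_1 = V_2$ equal to the exterior of the disk $\{|w| \le M\}$ furnished by local boundedness, yielding the contradiction. Your write-up is if anything slightly more careful about the quantifiers.
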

\begin{proof}
If $\mathcal{F}$ is non normal, by Theorem \ref{meyrath} $\mathcal{F}$ is weakly mixing with respect to $z_0.$ Let $U$ be a neighbourhood of $z_0$ and $M>0$ be such that $|f(z)|\leq M$ for all $z\in U.$ Weakly mixing implies that there is $f\in \mathcal{F}$ such that $f(U)\cap V_i\neq \emptyset,$ where $V_1=V_2$ is a exterior of a disk centered at origin and radius $M,$ which is a contradiction.
\end{proof}

\medskip

T. Meyrath [\cite{meyrath2022}, Corollary 1] obtained the following characterization of non-normality in terms of the expanding property:

\begin{theorem} \label{mcriteria} Let $\mathcal{F} $ be a family   of meromorphic functions on an open set  $D$ in $\mathbb{C}$, and $z_0 \in D$. Then the following are equivalent:
\begin{enumerate}
    \item There exists $A \subset \mathbb{C}_{\infty}$ with $|A| \geq 2$ such that $\mathcal{F}$ is expanding at $z_0$ with respect to $A$.
    \item $\mathcal{F}$ is non-normal at $z_0$.
    \item There exists $E \subset \mathbb{C}_{\infty}$ with $|E| \leq 2$ such that $\mathcal{F}$ is expanding at $z_0$ with respect to $\mathbb{C}_{\infty} \setminus E$.
\end{enumerate}
\end{theorem}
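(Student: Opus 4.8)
The plan is to prove the cyclic chain $(1) \Rightarrow (2) \Rightarrow (3) \Rightarrow (1)$, with almost all of the work concentrated in $(2) \Rightarrow (3)$. The implication $(3) \Rightarrow (1)$ is immediate: if $\mathcal{F}$ is expanding at $z_0$ with respect to $\mathbb{C}_\infty \setminus E$ for some $E$ with $|E| \leq 2$, then, since $\mathbb{C}_\infty$ is uncountable, the set $A := \mathbb{C}_\infty \setminus E$ already satisfies $|A| \geq 2$, so $(1)$ holds with this very $A$.

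For $(1) \Rightarrow (2)$ I would argue by contradiction. Suppose $\mathcal{F}$ is expanding at $z_0$ with respect to some $A$ with $|A| \geq 2$ but is normal at $z_0$, hence spherically normal on some neighbourhood $U_0$ of $z_0$. Fix distinct $a_1, a_2 \in A$, so that $K = \{a_1, a_2\}$ is a compact subset of $A$. Applying the expanding property to the shrinking neighbourhoods $U_n = D(z_0, 1/n) \subseteq U_0$ produces $f_n \in \mathcal{F}$ with $K \subset f_n(U_n)$, and hence points $p_n, q_n \in U_n$ (so $p_n, q_n \to z_0$) with $f_n(p_n) = a_1$ and $f_n(q_n) = a_2$. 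Passing to a subsequence, still written $\{f_n\}$, that converges spherically and locally uniformly on $U_0$ to a limit $g$ (meromorphic, or identically $\infty$), the standard fact that $f_n(p_n) \to g(z_0)$ and $f_n(q_n) \to g(z_0)$ forces $a_1 = g(z_0) = a_2$, contradicting $a_1 \neq a_2$. Thus $\mathcal{F}$ is non-normal at $z_0$.

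The heart of the proof is $(2) \Rightarrow (3)$, where I would invoke the well-known Zalcman rescaling lemma: non-normality of $\mathcal{F}$ at $z_0$ yields $f_n \in \mathcal{F}$, $z_n \to z_0$ and $\rho_n \to 0^+$ such that $g_n(\zeta) := f_n(z_n + \rho_n \zeta)$ converges spherically and locally uniformly on $\mathbb{C}$ to a \emph{nonconstant} meromorphic function $g$. Set $E := \mathbb{C}_\infty \setminus g(\mathbb{C})$. The first step is to verify $|E| \leq 2$: if $g$ is transcendental this is Picard's theorem, while if $g$ is rational it follows from the surjectivity of a nonconstant rational self-map of $\mathbb{C}_\infty$, which on $\mathbb{C}$ omits at most the single value $g(\infty)$. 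It then remains to prove $\mathcal{F}$ is expanding at $z_0$ with respect to $\mathbb{C}_\infty \setminus E = g(\mathbb{C})$. Given a neighbourhood $U$ of $z_0$ and a compact $K \subset g(\mathbb{C})$, openness of $g$ makes $\{g(D(0,R))\}_{R>0}$ an open cover of $K$, so $K \subset g(D(0,R_0))$ for some $R_0$; moreover $z_n + \rho_n \overline{D(0,R_0)} \subset U$ once $n$ is large.

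The remaining and genuinely delicate point is a uniform Hurwitz/Rouché transfer, which I expect to be the main obstacle. For each $w_0 \in K$ I would pick a preimage $\zeta_0 \in D(0,R_0)$ and a small circle about it on which $g - w_0$ stays bounded away from $0$; a Rouché comparison against $g_n - w$ then supplies a spherical neighbourhood $W_0$ of $w_0$ and an index $N_0$ so that for all $n \geq N_0$ \emph{every} $w \in W_0$ is attained by $g_n$ inside that circle. Compactness of $K$ reduces this to finitely many $w_0$, so that a single threshold $N$ gives $K \subset g_n(D(0,R_0)) \subset f_n(U)$ for all $n \geq N$. The subtlety lies precisely in making the argument principle work uniformly for the \emph{moving} target values $w$ ranging over $K$, and in carrying it out in the spherical metric so that poles (the value $\infty$) are treated on equal footing through the local use of $1/g$. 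Finally, since a single meromorphic function rescaled with $\rho_n \to 0$ can only converge to a constant, the $f_n$ cannot be eventually constant, so infinitely many distinct members of $\mathcal{F}$ occur; this confirms the expanding property and closes the cycle.
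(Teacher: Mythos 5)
The paper does not actually prove this statement: it is quoted verbatim as Corollary~1 of Meyrath--M\"uller \cite{meyrath2022} and used as a black box, so there is no in-paper proof to compare against. Judged on its own terms, your argument is correct and is essentially the standard (and, to my knowledge, the original source's) route: the cycle $(3)\Rightarrow(1)$ trivially, $(1)\Rightarrow(2)$ by extracting two preimage sequences tending to $z_0$ and contradicting continuous convergence of a normal subsequence, and $(2)\Rightarrow(3)$ via the Zalcman rescaling lemma, Picard (respectively surjectivity of rational maps) to get $|E|\le 2$, and a compactness-plus-Rouch\'e transfer to pass from $K\subset g(D(0,R_0))$ to $K\subset g_n(D(0,R_0))\subset f_n(U)$ for all large $n$. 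The two points that genuinely need care are both addressed: the uniform Hurwitz/Rouch\'e step for moving targets $w$ (handled by covering $K$ with finitely many spherical neighbourhoods $W_0$ and taking a common threshold, using $1/g$ near poles and near $w_0=\infty$), and the requirement of \emph{infinitely many} distinct $f\in\mathcal F$ (handled by noting that a single function rescaled with $\rho_n\to 0$ can only converge to a constant). I see no gap.
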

Using Theorem \ref{mcriteria} we give an alternate proof of {\it Montel's three point theorem}:
\begin{theorem}
A family $ \mathcal{F}$ of meromorphic functions on a domain $D$ in $\mathbb C$ that omits three distinct values $a, b, c \in \mathbb C_{\infty}$ is normal on $D$.
\end{theorem}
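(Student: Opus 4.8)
The plan is to deduce Montel's three-point theorem directly from the equivalence in Theorem \ref{mcriteria}, using the contrapositive. Suppose, for contradiction, that $\mathcal{F}$ fails to be normal on $D$. Normality on a domain is a local property, so there must exist a point $z_0 \in D$ at which $\mathcal{F}$ is non-normal. Applying the implication $(2) \Rightarrow (3)$ of Theorem \ref{mcriteria} at this point yields a set $E \subset \mathbb{C}_\infty$ with $|E| \leq 2$ such that $\mathcal{F}$ is expanding at $z_0$ with respect to $\mathbb{C}_\infty \setminus E$.

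Next I would exploit the expanding property against the omitted values. The hypothesis gives three distinct points $a, b, c \in \mathbb{C}_\infty$ that no member of $\mathcal{F}$ attains on $D$; since $|E| \leq 2$, at least one of these three values, say $a$, lies in $\mathbb{C}_\infty \setminus E$. The idea is to form a compact set $K$ inside $\mathbb{C}_\infty \setminus E$ that contains $a$ (for instance $K = \{a\}$ itself, or a small closed spherical disk about $a$ still avoiding $E$, which exists because $E$ is finite and hence the complement is open). By the definition of expanding at $z_0$, for every open neighbourhood $U$ of $z_0$ we have $K \subset f(U)$ for infinitely many $f \in \mathcal{F}$. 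In particular, taking any such neighbourhood $U \subseteq D$, there is at least one $f \in \mathcal{F}$ with $a \in K \subset f(U)$, so $a$ is in the image of $f$.

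This produces the contradiction: the value $a$ is attained by some $f \in \mathcal{F}$ on $D$, contradicting the assumption that every function in $\mathcal{F}$ omits $a$. Hence $\mathcal{F}$ must be normal at every point of $D$, and therefore normal on $D$.

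I do not anticipate a serious obstacle, since the heavy lifting is carried by Theorem \ref{mcriteria}; the only point requiring care is the bookkeeping with the finite set $E$. One must verify that among three distinct omitted values at least one survives the removal of the at-most-two-element set $E$, which is immediate by cardinality, and that a compact subset of $\mathbb{C}_\infty \setminus E$ realizing that value genuinely exists. The latter is clear because $E$ is finite, so $\mathbb{C}_\infty \setminus E$ is open and contains the singleton $\{a\}$ as a compact subset. The remaining care is simply to invoke the expanding property with a neighbourhood $U$ that lies within $D$, which is always possible since $z_0 \in D$ and $D$ is open.
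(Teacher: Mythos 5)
Your proposal is correct and follows essentially the same route as the paper: both argue by contradiction, invoke the implication $(2)\Rightarrow(3)$ of Theorem \ref{mcriteria} at a point of non-normality to obtain the exceptional set $E$ with $|E|\leq 2$, observe by cardinality that one omitted value $a$ survives in $\mathbb{C}_\infty\setminus E$, and apply the expanding property to the compact set $K=\{a\}$ to force $a$ into the image of some $f\in\mathcal{F}$. No gaps; your added remarks on why $\{a\}$ is a valid compact subset of the open set $\mathbb{C}_\infty\setminus E$ are a small point of extra care the paper leaves implicit.
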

\begin{proof} 
Assume that the family $ \mathcal{F}$ omits three distinct values $a, b, c \in \mathbb C_{\infty}$. Suppose, for the sake of contradiction, that $ \mathcal{F}$ is not normal at point $z_0 \in D$. By Theorem \ref{mcriteria}, there exists a set \ $E \subset \mathbb{C}_{\infty}$ with $|E| \leq 2$ such that $\mathcal{F}$ is expanding at $z_0$ with respect to $\mathbb{C}_{\infty} \setminus E$. Therefore for every 
open neighborhood $U$ of $z_0$ and every compact set $K \subseteq \mathbb{C}_{\infty} \setminus E$ we have 
\begin{equation}\label{m3t}
K \subset f(U)\;\text{for infinitely many}\;f \in \mathcal{F}.
\end{equation}
Since $D$ is open, we can choose a disk $D(z_0, r)$ such that $\overline{D}(z_0, r) \subset D$. Additionally, since $|E| \leq 2$, at least one of the three values $a, b, c$ (let's say $a$) belongs to $\mathbb{C}_{\infty} \setminus E$. If we take $U = D(z_0, r/2)$ and $K = \{a\}$, then Theorem (\ref{m3t}) implies $K \subset f(U)$ for infinitely many $f \in \mathcal{F}$ so that $a \in f(U)$ for infinitely many $f \in \mathcal{F}$. However, this implies that the value 
$a$ is assumed by the family 
$\mathcal F$ on $D$, resulting in a contradiction. Hence the result follows.
\end{proof}


\end{document}